\newtheorem{thrm}{Theorem}[section]
   \newtheorem{fact}[thrm]{Proposition}
   \newtheorem{col}[thrm]{Corollary}
   \newtheorem{problem}[thrm]{Problem}
\theoremstyle{remark}
\newtheorem{remark}[thrm]{Remark}
\title[A note on condensations of function spaces]{A note on condensations of function spaces onto $\sigma$-compact and analytic spaces}
\author{Miko\l aj Krupski}
\address{Institute of Mathematics\\ Polish Academy of Sciences\\ \newline Ul. \'Sniadeckich 8\\00--956 Warszawa\\ Poland }
\email{krupski@impan.pl}
\subjclass[2010]{Primary 54C35, 54A10, 54D60}
\keywords{Function space; pointwise convergence topology; $C_p(X)$ space; condensation; realcompact space}
\date{\today}
\thanks{The author was partially supported by the Polish National Science Center research grant UMO-2012/07/N/ST1/03525}
\begin{document}

\begin{abstract}
Modifying a construction of W.\ Marciszewski we prove (in {\textsf ZFC}) that there exists a subspace of the real line $\mathbb{R}$,
such that the realcompact space $C_p(X)$ of continuous real-valued functions on $X$ with the pointwise convergence topology does not admit a continuous
bijection onto a $\sigma$-compact space. This answers a question of Arhangel'skii.
\end{abstract}

\maketitle

\section{Introduction}
In this article, unless otherwise stated, by a space we mean a Hausdorff topological space.
For a space $X$, we denote by $C_p(X)$ the space of all continuous real-valued functions on $X$ equipped
with the pointwise convergence topology. It is well known that for a completely regular space $X$ the space $C_p(X)$ is almost
never $\sigma$-compact. Namely, it is $\sigma$-compact if and only if the space $X$ is finite (see \cite[1.2.186]{T}).
Thus it is natural to ask which $C_p(X)$ spaces admit a weaker $\sigma$-compact (compact) topology, approximating the original one.
This general problem posed
by Arhangel'skii can be reformulated using a concept of {\em condensation} i.e. continuous bijection:
{\em When there is a condensation of $C_p(X)$ onto a $\sigma$-compact (compact) space?} (see e.g. \cite[Problem 5.1]{M2}).

For example, as was proved by Michalewski in \cite{Mich}, the space $C_p(X)$ condenses onto a (metrizable) compactum
whenever $X$ is a metrizable analytic space i.e. $X$ is a continuous image of the space $\omega^\omega$ of the irrationals.
Let us recall two problems posed by Arhangel'skii concerning the possible generalizations of the result mentioned above.

\begin{problem}\label{problem1}(Arhangel'skii, \cite[Problem 4]{A})
Suppose, that $X$ is a separable metrizable space. Does $C_p(X)$ condense onto a $\sigma$-compact space?
\end{problem}

\begin{problem}\label{problem2}(Arhangel'skii, \cite[Problem 37]{A1})
Suppose, that $C_p(X)$ is realcompact. Does $C_p(X)$ condense onto a $\sigma$-compact space? 
\end{problem}

Note that Problem \ref{problem2}, which was also asked in the recent book of Tkachuk \cite[4.10.1]{T}, is more
general than Problem \ref{problem1}. Indeed, the space $C_p(X)$ is realcompact provided $X$ is separable (see \cite[Problems 418 and 429]{T}).

A consistent negative answer to Problem \ref{problem1} (and hence also to Problem \ref{problem2}) was given by Marciszewski in \cite{M1}.
He constructed, assuming that $\mathfrak{d}=2^\omega$, a subspace $X$ of the real line $\mathbb{R}$
such that $C_p(X)$ does not condense onto a $\sigma$-compact space. However the question if one can construct such a space without
any additional set-theoretic assumptions remained open
(see \cite[page 363]{M2}).

In this short note we show how to modify the construction from \cite{M1} to make it work in {\textsf ZFC}. Thus the answer to both aforementioned problems
is in the negative (see Theorem \ref{main} and Corollary \ref{col} below).

A significant part of the construction is the same
as in \cite{M1} and therefore some details will be omitted. The results presented in the main text of \cite{M1}
work for completely regular spaces. However, as was pointed out in \cite[Remark]{M1}, the construction
of a space $X$
can be modified to the effect that $C_p(X)$ cannot be condensed onto a $\sigma$-compact {\em Hausdorff} space.
It appears that as far as we are concerned with a condensation onto Tychonoff spaces, even the stronger result is true:
the space $C_p(X)$ cannot be condensed onto any Tychonoff analytic space.
We refer the interested reader to \cite[Chapter 5]{M2}, \cite{T1} and \cite{T} for more information on condensations in $C_p$-theory and
further references.

\section{Auxiliary results}
For a space $X$ and its subset $D$, we denote by $C_D(X)$ the subspace $\{f\upharpoonright D:f\in C_p(X)\}$ of
the space $C_p(D)\subseteq \mathbb{R}^D$. Note that the natural projection $\pi_D:\mathbb{R}^X\rightarrow \mathbb{R}^D$
condenses $C_p(X)$ onto $C_D(X)$, whenever $D$ is dense in $X$.

Proposition \ref{fakt1} given below was formulated in \cite{M1} without a proof (see \cite[Remark (2)]{M1}). Since it is
not completely straightforward we decided to enclose a short argument.

\begin{fact}\label{fakt1}
Suppose that a space $X$ is a countable union of metrizable compacta. Then there exists an injective Borel map of $X$ onto
a metrizable $\sigma$-compact space.
\end{fact}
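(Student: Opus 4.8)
The plan is to break $X$ into countably many \emph{disjoint} Borel pieces on which a homeomorphic embedding into a fixed metrizable $\sigma$-compact space is available, and then to glue these embeddings together into a single injective Borel map. Write $X=\bigcup_{n\in\omega}K_n$ with each $K_n$ a metrizable compactum, and set $L_0=K_0$ and $L_n=K_n\setminus\bigcup_{i<n}K_i$ for $n\geq 1$. Since $X$ is Hausdorff each $K_n$ is closed, so every $L_n$ is the intersection of the closed set $K_n$ with an open set; in particular the $L_n$ are Borel and $\{L_n\}_{n\in\omega}$ is a partition of $X$. Relative to $K_n$ the set $L_n$ is open, hence $F_\sigma$ in the metrizable compactum $K_n$, and therefore each $L_n$ is itself $\sigma$-compact and (separable) metrizable.

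Next I would realize all the pieces inside pairwise disjoint copies of the Hilbert cube. Fix for each $n$ a homeomorphic embedding $\varphi_n\colon K_n\hookrightarrow Q$, where $Q=[0,1]^\omega$, and consider the space $Q\times\omega$ with $\omega$ discrete; it is metrizable and $\sigma$-compact, being the countable union of the compacta $Q\times\{n\}$. Define $f\colon X\to Q\times\omega$ by $f(x)=(\varphi_n(x),n)$, where $n$ is the unique index with $x\in L_n$. Because distinct pieces are sent into disjoint slices $Q\times\{n\}$ and each $\varphi_n$ is injective, $f$ is injective.

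It then remains to verify that $f$ is Borel and that $Y:=f(X)$ is metrizable and $\sigma$-compact. For an open set $V=\bigcup_n V_n\times\{n\}\subseteq Q\times\omega$ (with $V_n$ open in $Q$) we have $f^{-1}(V)=\bigcup_n(\varphi_n\upharpoonright L_n)^{-1}(V_n)$; each term is open in $L_n$ and hence Borel in $X$ because $L_n$ is Borel, so $f^{-1}(V)$ is Borel and $f$ is a Borel map. The space $Y$ is metrizable as a subspace of the metrizable space $Q\times\omega$. Finally, writing each $L_n$ as a countable union of compacta $F_{n,m}$ (possible since $L_n$ is $\sigma$-compact), we get $Y=\bigcup_{n,m}\varphi_n(F_{n,m})\times\{n\}$, a countable union of compacta, so $Y$ is $\sigma$-compact; and $f$ maps $X$ onto $Y$, as required.

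The only genuinely delicate points are the two observations that make everything cohere: that the set-theoretic differences $L_n$, although no longer compact, remain simultaneously Borel and $\sigma$-compact, and that a map which is continuous on each member of a countable Borel partition is automatically Borel. Injectivity across the pieces is then bought cheaply by routing them into the disjoint slices $Q\times\{n\}$ — precisely the feature that a single embedding of all of $X$ might fail to provide, since the $K_n$ are allowed to overlap.
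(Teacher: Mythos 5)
Your proof is correct, and it takes a genuinely different route from the paper's. The paper first replaces the given cover by an \emph{increasing} one (using that finite unions of metrizable compacta are metrizable compacta), picks embeddings $i_n\colon K_n\to[0,1]^\omega$, extends each to a Borel map $f_n\colon X\to[0,1]^\omega$ that is constant off $K_n$, and takes the diagonal $F=\bigtriangleup_n f_n\colon X\to([0,1]^\omega)^\omega$; injectivity comes from the fact that any two points lie in a common $K_n$ (this is where the increasing arrangement is used), and $\sigma$-compactness of $F(X)$ is proved by checking that $F$ is continuous on each ring $K_{n+1}\setminus K_n$. You instead disjointify the cover into the Borel pieces $L_n=K_n\setminus\bigcup_{i<n}K_i$ and route each piece homeomorphically into its own clopen slice $Q\times\{n\}$ of $Q\times\omega$, so injectivity is automatic rather than arranged via a diagonal product. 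Both proofs ultimately rest on the same mechanism --- a map that is continuous on each member of a countable decomposition into Borel, $\sigma$-compact pieces is Borel and has $\sigma$-compact image --- but they implement injectivity differently: your disjoint-slices device eliminates the need for the increasing rearrangement and the diagonal map, making the argument slightly more self-contained, while the paper's construction keeps each $f_n$ globally defined on $X$ and packages the gluing into a single product map. Your verification of the delicate points (that the $L_n$, being open in the compacta $K_n$, are simultaneously Borel in $X$ and $\sigma$-compact, and that Borelness of $f$ can be checked on the basic open sets $\bigcup_n V_n\times\{n\}$) is complete and accurate.
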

\begin{proof}
Since a finite union of metrizable compacta is a metrizable compactum we have $X=\bigcup_{n\in \omega}K_n$, where $K_0=\emptyset$,
$K_n\subseteq K_{n+1}$ and each $K_n$ is metrizable compact.
For $n\in\omega$, let $i_n:K_n\rightarrow [0,1]^\omega$ be a homeomorphic embedding. Let $f_n:X\rightarrow [0,1]^\omega$ be a
Borel extension of $i_n$ taking $X\setminus K_n$ onto an arbitrary point.
The diagonal map $F=\bigtriangleup_{n\in \omega} f_n:X\rightarrow ([0,1]^\omega)^\omega$ is a Borel injection (every two points in $X$
belong to some $K_n$ and $f_n$ is injective on $K_n$). We shall prove that $F(X)$ is $\sigma$-compact.
To this end it suffices to show that for each $n\in \omega$ the map $F$ is continuous on the $\sigma$-compact
set $K_{n+1}\setminus K_n$. For $n=0$ this follows from the fact that each $f_n$ is continuous on $K_n$ and $K_1\subseteq K_n$, for $n\geqslant 1$.
Let $n\geqslant 1$. For $m\geqslant n+1$ the map $f_m$ is continuous on $K_{n+1}\setminus K_n\subseteq K_m$ and for $m<n+1$ the map $f_m$
is continuous on $K_{n+1}\setminus K_n$ being constant (we assumed that $f_m$ maps $X\setminus K_m$ onto a point).
\end{proof}

\begin{fact}\cite[Remark (3)]{M1}\label{factorization}
Let $E$ be a countable dense subset of a separable metrizable space $X$ and let $\psi:C_p(X)\rightarrow Y$ be continuous, where $Y$ is
an arbitrary second countable space.
Then $\psi$ has the following factorization property: There is a countable set $D\subseteq X$ containing $E$ and a
continuous map $\xi:C_D(X)\rightarrow Y$ such that $\psi=\xi\circ(\pi_D\upharpoonright C_p(X))$.
\end{fact}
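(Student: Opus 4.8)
The plan is to first isolate what actually requires proof. Since the set $D$ we seek must contain $E$, and $E$ is dense in $X$, the chosen $D$ will be dense in $X$; hence, as recorded above, $\pi_D\upharpoonright C_p(X)\colon C_p(X)\to C_D(X)$ is a \emph{bijection} (two continuous functions agreeing on the dense set $D$ coincide). Consequently $\xi:=\psi\circ(\pi_D\upharpoonright C_p(X))^{-1}$ is automatically a well-defined function on $C_D(X)$ satisfying $\psi=\xi\circ(\pi_D\upharpoonright C_p(X))$, \emph{for every} countable $D\supseteq E$. Thus the whole content of the statement is to choose $D$ so that this $\xi$ is \emph{continuous}; equivalently, so that for each member $V_n$ of a fixed countable base $\{V_n:n\in\omega\}$ of $Y$ the open set $\psi^{-1}(V_n)$ depends only on the coordinates lying in $D$, i.e. has the form $(\pi_D)^{-1}(W_n)$ for some open $W_n\subseteq C_D(X)$.

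First I would record the topological input that keeps $D$ countable. Because $X$ is separable metrizable we have $nw(C_p(X))=nw(X)=\omega$, so $C_p(X)$ is cosmic and in particular hereditarily Lindel\"of. Fix a countable base $\{V_n:n\in\omega\}$ of $Y$. Each $\psi^{-1}(V_n)$ is open in $C_p(X)$, and the standard basic sets $O(f,S,\varepsilon)=\{g\in C_p(X):|g(x)-f(x)|<\varepsilon\text{ for }x\in S\}$ (with $S\subseteq X$ finite, $\varepsilon>0$) that are contained in it form an open cover of $\psi^{-1}(V_n)$. By hereditary Lindel\"ofness this cover has a countable subcover, so $\psi^{-1}(V_n)=\bigcup_{k\in\omega}O(f_{n,k},S_{n,k},\varepsilon_{n,k})$ with every $S_{n,k}$ finite.

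Next I would set $D:=E\cup\bigcup_{n,k}S_{n,k}$, which is countable and contains $E$, hence is dense. Each box $O(f_{n,k},S_{n,k},\varepsilon_{n,k})$ depends only on the finitely many coordinates $S_{n,k}\subseteq D$, so putting $W_n:=\bigcup_k\{h\in C_D(X):|h(x)-f_{n,k}(x)|<\varepsilon_{n,k}\text{ for }x\in S_{n,k}\}$ gives an open subset of $C_D(X)$ with $(\pi_D)^{-1}(W_n)=\psi^{-1}(V_n)$. Since $\pi_D$ is a bijection, $\xi^{-1}(V_n)=\pi_D(\psi^{-1}(V_n))=W_n$ is open for every $n$, and as $\{V_n\}$ is a base of $Y$ this yields continuity of $\xi$, completing the argument.

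The step I expect to carry the real weight is the reduction to \emph{countably many} coordinates. A single open subset of $\mathbb{R}^X$ can genuinely depend on uncountably many coordinates, so a naive ``take the support of $\psi$ at each point'' approach fails; the crucial point is that hereditary Lindel\"ofness of $C_p(X)$ lets one write each $\psi^{-1}(V_n)$ as a \emph{countable} union of finitely supported boxes, after which the supports of all countably many preimages can be swept into one countable set $D$. The remaining verifications are routine: that $nw(C_p(X))=\omega$ forces hereditary Lindel\"ofness, and that density of $D$ makes $\pi_D$ a condensation, so that $\xi$ is genuinely a single-valued map on $C_D(X)$.
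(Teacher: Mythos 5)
Your proof is correct and follows essentially the same route as the paper's: exploit the countable network of $C_p(X)$ (hence hereditary Lindel\"ofness) to write each $\psi^{-1}(V_n)$ as a countable union of finitely supported basic open sets, collect the supports together with $E$ into $D$, and note that density of $D$ makes $\pi_D\upharpoonright C_p(X)$ injective so that $\xi$ is well defined and continuous. Your write-up merely spells out the continuity verification that the paper leaves as ``easy to check.''
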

\begin{proof}
Let $\{U_n\}_{n\in \omega}$ be a countable base for $Y$.
Since the space $X$ is separable metrizable it has a countable network. It follows that $C_p(X)$ also has a countable network and hence
it is hereditarily Lindel\"of. Thus, each open set in $C_p(X)$ is a countable union of basic open sets.

For each $n\in \omega$, we have $\psi^{-1}(U_n)=\bigcup_{m\in\omega}[A_m^n,V^n_m]$, where $[A_m^n,V_m^n]$ is a basic open set
in $C_p(X)$ given by a finite set $A_m^n\subseteq X$ and an open set $V_m^n\subseteq \mathbb{R}$.
It is easy to check that the set $D=E\cup\bigcup_{n,m\in\omega}A_m^n$ is as promised. The map $\xi$ is uniquely determined by $D$ because
density of $D$ implies that $\pi_D$ is injective on $C_p(X)$.
\end{proof}

The next proposition
can be proved by a simple transfinite induction (cf. \cite[the proof of 1.5.14]{vM}). It is a slight modification of the construction of the Bernstein set

\begin{fact}\label{Kuratowski}
There exists a set $\mathbb{K}\subseteq \mathbb{R}$ containing the rationals $\mathbb{Q}$
and such that both $\mathbb{K}$ and $\mathbb{R}\setminus \mathbb{K}$ intersect each copy of the Cantor set in $\mathbb{R}$.
\end{fact}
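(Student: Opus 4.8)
The plan is to mimic the classical Bernstein set construction by a transfinite recursion of length $\mathfrak{c}=2^\omega$, the only new ingredient being that all rationals must be forced into $\mathbb{K}$. First I would record two cardinality facts. Every copy $C$ of the Cantor set in $\mathbb{R}$ is homeomorphic to $2^\omega$ and hence has cardinality $|C|=\mathfrak{c}$. Moreover, there are exactly $\mathfrak{c}$ such copies: each is a closed subset of $\mathbb{R}$, and a second countable space has at most $2^\omega=\mathfrak{c}$ closed sets, while the translates of the standard middle-thirds Cantor set already provide $\mathfrak{c}$ of them. I then fix an enumeration $\{C_\alpha:\alpha<\mathfrak{c}\}$ of all copies of the Cantor set in $\mathbb{R}$.

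Next I would construct, by recursion on $\alpha<\mathfrak{c}$, two points $x_\alpha,y_\alpha\in C_\alpha$ destined for $\mathbb{K}$ and for $\mathbb{R}\setminus\mathbb{K}$ respectively. At stage $\alpha$, assuming the earlier points are defined, set $P_\alpha=\{x_\beta:\beta<\alpha\}\cup\{y_\beta:\beta<\alpha\}$, so that $|P_\alpha|\leqslant 2\cdot|\alpha|<\mathfrak{c}$. Since $|C_\alpha|=\mathfrak{c}$ strictly exceeds $|P_\alpha|+\aleph_0$, I may pick $x_\alpha\in C_\alpha\setminus P_\alpha$ and then $y_\alpha\in C_\alpha\setminus(P_\alpha\cup\{x_\alpha\}\cup\mathbb{Q})$. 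The one move departing from the usual Bernstein construction is deleting $\mathbb{Q}$ when choosing the ``outside'' point $y_\alpha$; this is harmless precisely because $\mathbb{Q}$ is countable while the remaining set still has full cardinality $\mathfrak{c}$.

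Finally I would put $\mathbb{K}=\mathbb{Q}\cup\{x_\alpha:\alpha<\mathfrak{c}\}$. By construction $\mathbb{Q}\subseteq\mathbb{K}$, and $x_\alpha\in\mathbb{K}\cap C_\alpha$ shows $\mathbb{K}$ meets every copy of the Cantor set. It then remains to verify $y_\alpha\in(\mathbb{R}\setminus\mathbb{K})\cap C_\alpha$ for each $\alpha$, which yields the symmetric conclusion. As $y_\alpha\notin\mathbb{Q}$, I only need $y_\alpha\neq x_\beta$ for every $\beta$: for $\beta\leqslant\alpha$ this holds because $y_\alpha$ was chosen outside $P_\alpha\cup\{x_\alpha\}$, and for $\beta>\alpha$ it holds because $y_\alpha\in P_\beta$ forces $x_\beta\neq y_\alpha$. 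I expect the only point needing care to be the bookkeeping that keeps the two colour classes disjoint across the recursion; the inequality $|P_\alpha|+\aleph_0<\mathfrak{c}=|C_\alpha|$ powering every step is routine once the two counting facts above are in hand.
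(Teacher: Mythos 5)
Your proposal is correct and is exactly the argument the paper has in mind: the paper proves this proposition only by remarking that it is a ``slight modification of the construction of the Bernstein set'' via transfinite induction, and your recursion (enumerating the $\mathfrak{c}$ many Cantor copies, picking $x_\alpha, y_\alpha$ avoiding the fewer-than-$\mathfrak{c}$ previously chosen points, and additionally keeping $y_\alpha$ off $\mathbb{Q}$ before setting $\mathbb{K}=\mathbb{Q}\cup\{x_\alpha:\alpha<\mathfrak{c}\}$) is precisely that modification, carried out with all the bookkeeping details verified.
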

Since in every Cantor set we can find continuum many pairwise disjoint Cantor sets, both $\mathbb{K}$ and $\mathbb{R}\setminus \mathbb{K}$
have cardinality $2^\omega$.

The next, easy proposition will be crucial in our construction.

\begin{fact}\label{card}
Let $\mathbb{K}\subseteq \mathbb{R}$ be the set given by Proposition \ref{Kuratowski}. Suppose that $G\subseteq\mathbb{R}$ is a $G_\delta$ set
containing $\mathbb{K}$. Then the set $\mathbb{R}\setminus G$ is countable.
\end{fact}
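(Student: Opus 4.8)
The plan is to argue by contradiction, exploiting the fact that the complement of a $G_\delta$ set is an $F_\sigma$ set and then invoking the perfect set property. So suppose, towards a contradiction, that $\mathbb{R}\setminus G$ is uncountable. Since $G$ is $G_\delta$, the set $\mathbb{R}\setminus G$ is $F_\sigma$, that is, a countable union $\bigcup_{n\in\omega}F_n$ of closed subsets of $\mathbb{R}$. As this union is uncountable, at least one of the $F_n$ must be uncountable.

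The key topological ingredient I would use is the classical fact that every uncountable closed subset of $\mathbb{R}$ contains a copy of the Cantor set. This follows from the Cantor--Bendixson theorem: an uncountable closed set has a nonempty perfect kernel, and every nonempty perfect subset of $\mathbb{R}$ contains a topological copy of the Cantor set. Applying this to the uncountable closed set $F_n$, I obtain a Cantor set $C\subseteq F_n\subseteq\mathbb{R}\setminus G$.

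The contradiction then comes directly from the defining property of $\mathbb{K}$. Since $\mathbb{K}\subseteq G$, we have $C\subseteq\mathbb{R}\setminus G\subseteq\mathbb{R}\setminus\mathbb{K}$, so $C$ is a copy of the Cantor set disjoint from $\mathbb{K}$. But Proposition \ref{Kuratowski} guarantees that $\mathbb{K}$ intersects every copy of the Cantor set in $\mathbb{R}$, whence $\mathbb{K}\cap C\neq\emptyset$ --- a contradiction. Therefore $\mathbb{R}\setminus G$ must be countable. The argument is short and its only substantive ingredient is the perfect set property, so I anticipate no genuine obstacle beyond citing the Cantor--Bendixson theorem in the correct form; I would merely remark that it is precisely the condition ``$\mathbb{K}$ meets every Cantor set'' from Proposition \ref{Kuratowski} that is invoked here, the symmetric condition on $\mathbb{R}\setminus\mathbb{K}$ playing no role in this statement.
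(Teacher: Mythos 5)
Your proposal is correct and follows essentially the same argument as the paper: write $\mathbb{R}\setminus G$ as a countable union of closed sets, note that if it were uncountable one of these closed sets would be uncountable and hence contain a Cantor set, contradicting the defining property of $\mathbb{K}$. The only difference is that you spell out the Cantor--Bendixson justification, which the paper leaves implicit.
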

\begin{proof}
Since $G$ is $G_\delta$, we have $\mathbb{R}\setminus G=\bigcup_{n\in\omega}F_n$, where each $F_n$ is closed.
If $\mathbb{R}\setminus G$ were uncountable one of the sets $F_n$,being closed and uncountable, would contain a copy of the Cantor set.
This however would contradict the property of $\mathbb{K}$.
\end{proof}

\section{The construction}

In this section we will prove the following
\begin{thrm}\label{main}
There is a space $X\subseteq\mathbb{R}$, containing the rationals~$\mathbb{Q}$, such that
$C_p(X)$ does not condense neither onto a $\sigma$-compact (Hausdorff) space, nor onto an analytic Tychonoff space.
 
\end{thrm}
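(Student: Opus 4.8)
The plan is to take $X=\mathbb{K}$, the set produced by Proposition \ref{Kuratowski}; it contains $\mathbb{Q}$ and, being a subspace of $\mathbb{R}$, is separable metrizable, so that $C_p(X)$ is realcompact (which is what Corollary \ref{col} will need). Suppose toward a contradiction that $\phi\colon C_p(X)\to Z$ is a condensation, where $Z$ is either $\sigma$-compact Hausdorff or analytic Tychonoff. First I would cut the target down to a second countable space: since $X$ has a countable network, so does $C_p(X)$, and hence so does its continuous image $Z$; in the $\sigma$-compact case each compact summand of $Z$ is then a compactum with a countable network, hence metrizable, so $Z$ is a countable union of metrizable compacta (in particular second countable), while in the analytic case $Z$ is second countable by definition.

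Next I would localise to a countable set of coordinates. Applying Proposition \ref{factorization} with $E=\mathbb{Q}$ (dense in $X$) yields a countable $D\supseteq\mathbb{Q}$ and a continuous $\xi\colon C_D(X)\to Z$ with $\phi=\xi\circ(\pi_D\upharpoonright C_p(X))$; since $\phi$ and $\pi_D$ are injective, $\xi$ is a condensation of $C_D(X)$ onto $Z$. In the $\sigma$-compact case I would then invoke Proposition \ref{fakt1} to obtain an injective Borel map of $Z$ onto a metrizable $\sigma$-compact space $Z'$, so that after composing, $C_D(X)$ admits an injective Borel map onto the standard Borel (in particular analytic) space $Z'$; in the analytic case $\xi$ itself is already a continuous injection onto the analytic space $Z$. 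Either way the problem is reduced to the following: the separable metrizable space $C_D(X)\subseteq\mathbb{R}^D$ admits a Borel injection onto an analytic space $W$.

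The contradiction is to come from the Bernstein property of $\mathbb{K}$. For $g\in\mathbb{R}^D$ put $B(g)=\{t\in\mathbb{R}:\lim_{d\to t,\,d\in D}g(d)\text{ does not exist}\}$; measuring oscillation shows $B(g)$ is an $F_\sigma$ set and that the relation $\{(t,g):t\in B(g)\}$ is Borel in $\mathbb{R}\times\mathbb{R}^D$. A function $g$ belongs to $C_D(X)$ precisely when it is continuous on $D$ and $B(g)\cap\mathbb{K}=\emptyset$, i.e. $L(g)=\mathbb{R}\setminus B(g)$ is a $G_\delta$ set containing $\mathbb{K}$; by Proposition \ref{card} this forces $B(g)$ to be \emph{countable} for every $g\in C_D(X)$. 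Conversely, since $D\supseteq\mathbb{Q}$ is dense, for each $t\notin\mathbb{K}$ a function oscillating like $\sin\bigl(1/(x-t)\bigr)$ near $t$ and vanishing away from $t$ restricts to an element $g\in C_D(X)$ with $B(g)=\{t\}$. Hence \[\mathbb{R}\setminus\mathbb{K}=\bigcup_{g\in C_D(X)}B(g),\] and the plan is to present the right-hand side as the projection of the Borel relation $\{(t,g):t\in B(g)\}$ intersected with $\mathbb{R}\times C_D(X)$. If this projection can be shown to be analytic, then $\mathbb{R}\setminus\mathbb{K}$ is an uncountable analytic set and therefore contains a copy of the Cantor set, contradicting the defining property of $\mathbb{K}$.

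The hard part will be exactly this transfer of analyticity across the condensation. The obstacle is that $\xi^{-1}$ need not be Borel and $C_D(X)$ itself is \emph{not} analytic (the displayed identity already shows that analyticity of $C_D(X)$ would make $\mathbb{R}\setminus\mathbb{K}$ analytic). To get around this I would use the decomposition $W=\bigcup_n W_n$ into compact (resp.\ analytic) pieces and extend $\xi$, regarded as a map into a Polish space $P\supseteq W$, by the Lavrentiev--Kuratowski theorem to a continuous map $\xi'\colon G\to P$ on a $G_\delta$ set $G$ with $C_D(X)\subseteq G\subseteq\mathbb{R}^D$; the sets $(\xi')^{-1}(W_n)$ are then Polish (resp.\ analytic), and the crux is to argue that the corresponding pieces $\xi^{-1}(W_n)=(\xi')^{-1}(W_n)\cap C_D(X)$ contribute analytically to the union above, so that $\bigcup_{g\in C_D(X)}B(g)$ becomes a countable union of analytic projections. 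Carrying out this last step --- reconciling the non-analytic transversal $C_D(X)$ with the Borel structure supplied by Propositions \ref{fakt1} and \ref{card} --- is where the real work of the proof lies.
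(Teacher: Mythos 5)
Your proposal diverges from the paper at the very first step, and the divergence is fatal. You take $X=\mathbb{K}$ itself, but in the paper $\mathbb{K}$ is only a \emph{seed}: the space actually constructed is $X=\mathbb{K}\cup\{x_\alpha:\alpha<2^\omega\}$, where the points $x_\alpha$ are chosen by transfinite recursion so as to diagonalize against an enumeration $\{\varphi_\alpha:B_\alpha\to\mathbb{R}^\omega\}$ of \emph{all} Borel maps defined on analytic subsets of $\mathbb{R}^{D_\alpha}$. The only role of $\mathbb{K}$ is Proposition \ref{card}: every $G_\delta$ set containing $\mathbb{K}$ is co-countable, so at each stage the set $(\mathbb{R}\setminus\mathbb{K})\cap\bigcap_{\beta\leqslant\alpha}A_\beta$ still has cardinality $2^\omega$ and the recursion survives without the hypothesis $\mathfrak{d}=2^\omega$ of \cite{M1}. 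The final contradiction is not descriptive-set-theoretic at all; it comes from the four diagonalization cases, the decisive one being Case 4, where an injective $\varphi_\alpha$ is defeated because the points added \emph{after} stage $\alpha$ make $C_D(X)$ a proper subset of $C_{D}(X_\alpha)$, so $\varphi_\alpha$ cannot map $C_D(X)$ \emph{onto} its target. Nothing of this kind is available when $X=\mathbb{K}$, and indeed it is not known (and is exactly why both \cite{M1} and this paper resort to diagonalization) that the Bernstein-type property of $\mathbb{K}$ by itself rules out condensations of $C_p(\mathbb{K})$ onto $\sigma$-compact or analytic spaces.

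The step you defer to the end (``where the real work of the proof lies'') is therefore not a loose end but the entire problem, and the transfer principle it requires is false. A Kuratowski extension $\varphi''$ of your map is injective only on $C_D(\mathbb{K})$, and the analytic set $(\varphi'')^{-1}(W)$ may properly contain $C_D(\mathbb{K})$; projecting your Borel relation $\{(t,g):t\in B(g)\}$ over it yields a possibly larger set than $\mathbb{R}\setminus\mathbb{K}$, so no contradiction with Proposition \ref{Kuratowski} follows. More bluntly, a Borel (even continuous) condensation onto a compactum implies nothing about the Borel structure of the domain: if $A\subseteq[0,1]^2$ is the graph of a non-Borel-measurable function (say the characteristic function of a Bernstein set), then the first-coordinate projection is a continuous bijection of $A$ onto $[0,1]$, yet $A$ is not analytic. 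Your own (correct, and nice) observations --- the oscillation characterization of $C_D(\mathbb{K})$, the countability of each $B(g)$ via Proposition \ref{card}, and the resulting non-analyticity of $C_D(\mathbb{K})$ --- are fully consistent with this phenomenon and cannot produce the contradiction. A separate, smaller error: a countable union of metrizable compacta, and likewise a Tychonoff analytic space, need not be second countable (both only have countable networks; any countable non-first-countable Tychonoff space is a counterexample), so before applying Proposition \ref{factorization} one must, as the paper does, first condense the target onto a space with a countable base.
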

\begin{proof}
Let $\mathcal{F}$ be the following family of maps (cf. \cite[Remark]{M1})
\begin{gather*}
\label{foo}\mathcal{F}=\{\varphi:B\rightarrow \mathbb{R}^\omega: \text{$B$ is an analytic subset of $\mathbb{R}^D$, for some
countable $D\subseteq \mathbb{R}$}\\ 
\label{bar}\text{ with $\mathbb{Q}\subseteq D$, $\varphi$ is Borel}\}.
\end{gather*}

The family $\mathcal{F}$ has cardinality $2^\omega$ and hence we can
enumerate it as $\{\varphi_\alpha:B_\alpha\rightarrow \mathbb{R}^\omega:\alpha<2^\omega\}$ (repetitions allowed) in such a way that
$B_\alpha$ is a subset of $\mathbb{R}^{D_\alpha}$.
Let $\mathbb{K}\subseteq \mathbb{R}$ be the set given by Proposition \ref{Kuratowski}.

In general, we are going to repeat the construction from \cite{M1}. The only change we are going to make is the starting point:
We will use the set $\mathbb{K}$ instead of $\mathbb{Q}$. This
simple idea allows us to drop the set-theoretic assumption $\mathfrak{d}=2^\omega$ required in \cite{M1} (cf. Remark \ref{remark} at the end of
the paper).
By induction we choose points $x_\alpha,y_\alpha\in \mathbb{R}\setminus \mathbb{K}$,
$G_\delta$-subsets $A_\alpha$ of $\mathbb{R}$ containing $\mathbb{K}$ and continuous functions $f_\alpha,g_\alpha:A_\alpha\rightarrow\mathbb{R}$
such that the following conditions are satisfied (we put $X_\alpha=\mathbb{K}\cup\{x_\beta:\beta<\alpha\}$).
\begin{itemize}
 \item[(i)] $x_\beta\neq x_\alpha$, for $\beta<\alpha$,
 \item[(ii)] $(X_\alpha\cup \{x_\alpha\})\cap\{y_\beta:\beta\leqslant\alpha\}=\emptyset$,
 \item[(iii)] $x_\alpha\in (\mathbb{R}\setminus \mathbb{K}) \cap(\bigcap_{\beta\leqslant\alpha}A_\beta)$,
 \item[(iv)] if $D_\alpha\setminus X_\alpha\neq\emptyset$, then $y_\alpha\in D_\alpha$,
 \item[(v)] if $D_\alpha\subseteq X_\alpha$ and $C_{D_\alpha}(X_\alpha)\setminus B_\alpha\neq\emptyset$,
then $X_\alpha\subseteq A_\alpha$ and $f_\alpha\upharpoonright D_\alpha\notin B_\alpha$,
 \item[(vi)] if $D_\alpha\subseteq X_\alpha$, $C_{D_\alpha}(X_\alpha)\subseteq B_\alpha$ and $\varphi_\alpha\upharpoonright C_{D_\alpha}(X_\alpha)$
is not injective, then $X_\alpha\subseteq A_\alpha$, $f_\alpha\neq g_\alpha$ and $\varphi(f_\alpha\upharpoonright D_\alpha)=\varphi(g_\alpha\upharpoonright D_\alpha)$.
\end{itemize}

Note that for $\alpha<2^\omega$ the set $(\mathbb{R}\setminus \mathbb{K})\cap (\bigcap_{\beta\leqslant\alpha}A_\beta)$ has cardinality $2^\omega$.
Indeed, for each $\beta\leqslant\alpha$ the set $A_\beta$ is a $G_\delta$-subset of $\mathbb{R}$ containing $\mathbb{K}$ and hence Proposition \ref{card}
implies that $|(\bigcap_{\beta\leqslant\alpha}A_\beta)^c|<2^\omega$. Moreover, as we have already observed, the set $\mathbb{R}\setminus \mathbb{K}$ has
cardinality $2^\omega$.

Now, the inductive step can be made by considering the same four cases (corresponding to conditions (iv)-(vi)) as in \cite{M1}.

Put $X=\mathbb{K}\cup \{x_\alpha:\alpha<2^\omega\}$. We need to show that there is no condensation from $C_p(X)$ onto a space $M$
being $\sigma$-compact or Tychonoff and analytic.
The argument given in \cite{M1} works for $M$ being Tychonoff and $\sigma$-compact. If $M$ is just Hausdorff the proof is a little bit different and for this
reason we enclose a justification.

Let $M$ be $\sigma$-compact. Suppose that there is a condensation $\psi:C_p(X)\rightarrow M$.
Since $X$ is separable metrizable, the space $C_p(X)$ has
a countable network and so does $M$. Thus $M$ is a countable union of metrizable compacta. Since every space with a countable
network condenses onto a space with a
countable base \cite[Ch.2, Problem 149]{AP}, without loss of generality we may assume that $M$ has a countable base. By Proposition \ref{factorization},
there exists a countable set $D\subseteq X$ containing $\mathbb{Q}$ and a map $\xi:C_D(X)\rightarrow M$ such that
$\psi=\xi\circ(\pi_D\upharpoonright C_p(X))$. Proposition \ref{fakt1} yields the existence of a metrizable $\sigma$-compact space $S$ and
an injective Borel map $\eta:M\rightarrow S$ (in the case $M$ is an analytic Tychonoff space, by \cite[Problem 156 (iii)]{T}
there exists a continuous map $\eta$ and a space $S$ with a countable base which is analytic as a continuous image of an analytic space $M$).
As $S$ is metrizable, we can assume that $S\subseteq \mathbb{R}^\omega$. We take
$\varphi'=\eta\circ\xi:C_D(X)\rightarrow \mathbb{R}^\omega$. Using a theorem of Kuratowski (see \cite[\S 35.VI]{K}) we can find
a Borel subset $B'$ of $\mathbb{R}^D$ containing $C_D(X)$ and a Borel extension $\varphi'':B'\rightarrow \mathbb{R}^\omega$ of a Borel injective
map $\varphi'$.
From now on we proceed as in \cite{M1}.
Take $B=(\varphi'')^{-1}(S)$ and $\varphi=\varphi''\upharpoonright B$. The set $B$ is analytic, being a Borel preimage of a $\sigma$-compact set
(if $M$ were a Tychonoff analytic space, the set $B$ is a Borel preimage of an analytic set hence it is analytic)
and $\varphi$ maps injectively $C_D(X)$ onto $S$. The map $\varphi$ belongs to $\mathcal{F}$ and thus, there is $\alpha<2^\omega$ such that
$\varphi=\varphi_\alpha$, $D=D_\alpha$, $B=B_\alpha$.

We consider the following four complementary cases:\\
{\bf Case 1:} $D_\alpha\setminus X_\alpha\neq\emptyset$.

By (iv), $y_\alpha\in D_\alpha=D$ and $y_\alpha\notin X$ by (ii). Hence $D\setminus X\neq\emptyset$, a contradiction.\\
{\bf Case 2:} $D_\alpha\subseteq X_\alpha$ and $C_{D_\alpha}(X_\alpha)\setminus B_\alpha\neq\emptyset$.

By (v), $X_\alpha\subseteq A_\alpha$ and by (iii) $x_\beta\in A_\alpha$ for $\beta\geqslant\alpha$, so $X\subseteq A_\alpha$ and
$f_\alpha\upharpoonright X\in C_p(X)$. Condition (v) implies that $f_\alpha\upharpoonright D\in C_D(X)\setminus B$, a contradiction.\\
{\bf Case 3:} $D_\alpha\subseteq X_\alpha$, $C_{D_\alpha}(X_\alpha)\subseteq B_\alpha$ and $\varphi_\alpha\upharpoonright C_{D_\alpha}(X_\alpha)$
is not injective.

Similarly as in Case 2, condition (vi) implies that $X\subseteq A_\alpha$ and
$f_\alpha\upharpoonright X, g_\alpha\upharpoonright X\in C_p(X)$. Then $f_\alpha\upharpoonright D, g_\alpha\upharpoonright D$ are distinct
elements of $C_D(X)$ and $\varphi(f_\alpha\upharpoonright D)=\varphi(g_\alpha\upharpoonright D)$, a contradiction.\\
{\bf Case 4:} $D_\alpha\subseteq X_\alpha$, $C_{D_\alpha}(X_\alpha)\subseteq B_\alpha$ and $\varphi_\alpha\upharpoonright C_{D_\alpha}(X_\alpha)$
is injective.

By (i), $X_\alpha$ is a proper subset of $X$ and hence $C_D(X)$ is a proper subset of $C_D(X_\alpha)$ (see \cite[Proposition 2.5]{M1}).
Since $\varphi$ is injective on $C_D(X_\alpha)$ and $\varphi(C_D(X_\alpha))\subseteq\varphi(B)=S$, we have $S\setminus \varphi(C_D(X))\neq\emptyset$,
a contradiction.
\end{proof}
Theorem \ref{main} gives a negative answer to Problem \ref{problem1}. As we mentioned in Introduction it also immediately implies the following negative
answer to Problem \ref{problem2}

\begin{col}\label{col}
There is a space $C_p(X)$ which is realcompact and does not admit a condensation onto a $\sigma$-compact space. 
\end{col}
\begin{remark}
It seems that the answer to Problem \ref{problem2} has been known before. As R. Pol observed
it is enough to consider $X=\omega\cup\{\infty\}$, where the points of $\omega$ are isolated and the neighborhoods of $\infty$
are given by an analytic non-Borel filter on $\omega$ (see \cite[Theorem 4.1]{LvMP}). Then $C_p(X)$ is clearly realcompact and does not condense onto
any Borel set.
\end{remark}

\begin{remark}\label{remark}
We feel that the reader deserves a better explanation why the proof of Theorem \ref{main}, although very similar to the construction from \cite{M1},
does not require the set-theoretic assumption $\mathfrak{d}=2^\omega$ which was vital in \cite{M1}.
Recall that $\mathfrak{d}=2^\omega$ is equivalent to the following statement:
\begin{gather*}
\label{up}(\ast) \text{ The intersection of less than continuum many } G_\delta\text{-subsets of }\mathbb{R}\text{ containing }\mathbb{Q}\\
\label{down}\text{ has cardinality }2^\omega.
\end{gather*}

Now, if we start the construction of the space $X$ from $\mathbb{Q}$ and try to add points $x_\alpha\in\mathbb{R}\setminus\mathbb{Q}$ inductively,
as it was done in \cite{M1}, we will need $(\ast)$ to fulfill conditions (i)-(iii) of the construction. That is, we will need $(\ast)$
to make sure that the intersection
$(\mathbb{R}\setminus\mathbb{Q})\cap\bigcap_{\beta\leqslant\alpha}A_\beta$ is big enough (has cardinality $2^\omega$) to choose a point
$x_\alpha$ belonging to it and distinct from points already chosen.

If we start the construction of $X$ from $\mathbb{K}$ (as it was done in the proof of Theorem \ref{main}) we force the
intersection $(\mathbb{R}\setminus\mathbb{K})\cap\bigcap_{\beta\leqslant\alpha}A_\beta$ to be big, simply by making sure that
it contains the set $\mathbb{K}$. Thus we no longer need the statement $(\ast)$.
\end{remark}

\textbf{Acknowledgment.}
\medskip

I would like to thank Witold Marciszewski for discussions on the subject. I am
also indebted to Roman Pol for valuable suggestions and to Pawe{\l} Krupski and Grzegorz Plebanek for reading the
preliminary version of the paper.

\end{document}